\documentclass[twoside, reqno]{amsart}

\usepackage[T1]{fontenc}
\usepackage[utf8]{inputenc}
\usepackage{amsmath,amssymb,amsthm}
\usepackage{microtype}
\usepackage[hyphens]{url}
\usepackage{hyperref}
\usepackage{enumitem}
\usepackage{xcolor}
\usepackage{graphicx}
\usepackage{caption}
\usepackage{mathtools}
\usepackage{thmtools}

\hypersetup{
  colorlinks=true,
  linkcolor=blue!50!black,
  citecolor=blue!50!black,
  urlcolor=blue!50!black,
}

\def\adj{\mathrel{\!\mathrel-\mkern-8mu\mathrel-\mkern-8mu\mathrel-\!}}
\newtheorem{theorem}{Theorem}
\newtheorem*{corollary}{Corollary}
\newtheorem{proposition}[theorem]{Proposition}
\newtheorem{lemma}[theorem]{Lemma}

\theoremstyle{definition}
\newtheorem{definition}[theorem]{Definition}

\theoremstyle{remark}
\newtheorem{remark}[theorem]{Remark}

\newcommand{\RR}{\mathbb R}
\newcommand{\diag}{\operatorname{diag}}
\newcommand{\epsov}{\varepsilon}

\makeatletter
\def\thmhead@plain#1#2#3{%
  \thmname{#1}\thmnumber{\@ifnotempty{#1}{ }\@upn{#2}}%
  \thmnote{ \the\thm@notefont(#3)}}% no group inside \thmnote
\let\thmhead\thmhead@plain
\makeatother

\title[The Ferrers bound for spanning trees in bipartite graphs]
{The Ferrers bound for \\spanning trees in bipartite graphs}
\author{Boon Suan Ho}
\address{Department of Mathematics, National University of Singapore}
\email{\href{mailto:hbs@u.nus.edu}{hbs@u.nus.edu}}
\date{}

\begin{document}
\begin{abstract}
We prove Ehrenborg's conjecture that every connected bipartite graph $G$ with 
parts of size $m$ and $n$ has at most $\frac{1}{mn}\prod_{v\in V(G)}\deg(v)$ 
spanning trees, and that equality holds if and only if $G$ is a Ferrers graph. 
The proof is fully formalized in Lean 4.
\end{abstract}

\maketitle

\section{Introduction}
Given a bipartite graph $G=(X\sqcup Y,E)$, its \emph{Ferrers invariant} is defined by
\[
F(G)\coloneqq \frac{1}{|X||Y|}\prod_{v\in V(G)} \deg(v).
\]
Ehrenborg conjectured in 2006 that the number $\tau(G)$ of spanning trees in 
$G$ is at most $F(G)$ for every bipartite $G$ (see \cite{Slone}). In this paper, 
we prove this conjecture and characterize the equality case.
To avoid trivial issues, we will assume that all graphs are connected in what follows.

The origin of the problem lies in the class of \emph{Ferrers graphs}, studied by 
Ehrenborg and van Willigenburg~\cite{EhrenborgVW}. In one convenient formulation, 
a connected bipartite graph is Ferrers if the neighborhoods of the vertices on one 
side can be linearly ordered by inclusion. 

\begin{figure}[htbp]
    \centering
    \includegraphics[width=0.8\textwidth]{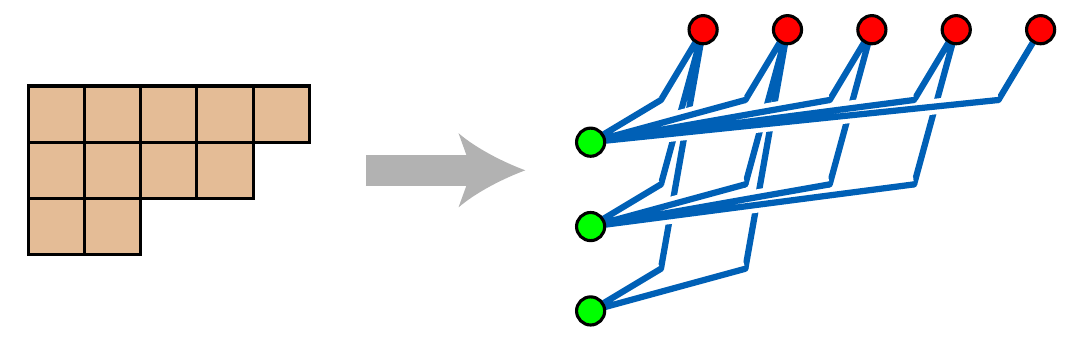}
    \caption{The edges of a Ferrers graph are in direct correspondence with 
    the boxes in a Ferrers diagram. (Reproduced from David Eppstein, 
    ``Four open from IPAM''~\cite{EppsteinIPAM}, 2009, CC BY 4.0 license.)}
    \label{fig:ferrers-graph}
\end{figure}

Ehrenborg and van Willigenburg proved that Ferrers graphs satisfy the exact 
product formula
\[
\tau(G)=F(G),
\]
and in fact established a weighted refinement, using ideas from the theory of 
electrical networks \cite{EhrenborgVW}. The Ferrers equality case has since 
been revisited from several directions: Burns gave bijective proofs 
\cite{Burns}, Bapat gave another proof via resistance distance 
\cite{Bapat}, Volkova obtained a proof by inductively computing the 
determinant of a matrix \cite{Volkova}, and Go, Khwa, Luo, and Stamps 
later rederived the Ferrers formula using triangular rank-one 
perturbations of Laplacian matrices \cite{GoKhwaLuoStamps}.

The general conjecture has nevertheless resisted earlier attacks. 
Garrett and Klee~\cite{GarrettKlee} reformulated the conjecture in terms 
of the nonnegativity of a certain homogeneous polynomial, and used this 
reformulation to verify the conjecture when one part has size at most $5$. 
They also proved separately that adding a pendant vertex preserves the 
conjecture, which yields the tree case.
See also Slone's survey~\cite{Slone} and Bapat's exposition \cite{Bapat}. 
Koo's senior thesis \cite{Koo} established further special cases, including 
even cycles and sufficiently dense bipartite graphs with a cut vertex of 
degree~$2$, and also showed that the operation of connecting two graphs by 
a new edge preserves Ferrers-goodness (that is, preserves the bound 
$\tau(G)\le F(G)$). Slone~\cite{Slone} also proved that gluing Ferrers-good graphs 
along a vertex preserves Ferrers-goodness, and consequently that any minimal 
counterexample must be $2$-connected; the same paper recorded computational 
verification up to $13$ vertices and proposed a stronger majorization-based 
statement that was shown there to be false. 
Later work of Slone \cite{Slone2} extended the computational verification 
from $13$ to $17$ vertices.
In 2020, Volkova proved the conjecture for one-side regular bipartite 
graphs~\cite{Volkova}.

At the same time, several complementary reformulations and weaker bounds were 
developed. Bozkurt proved the general bipartite estimate
\[
\tau(G)\le \frac{1}{|E(G)|}\prod_{v\in V(G)} \deg(v),
\]
with equality exactly for complete bipartite graphs \cite{Bozkurt}; this is 
weaker than the Ferrers bound by a factor of $1$ over the bipartite density 
$|E(G)|/(|X||Y|)$. Spectral and majorization reformulations were recorded by 
Slone and Bapat \cite{Slone,Bapat}, and more recently Cherkashin and Prozorov 
showed that a natural polynomial version of Ehrenborg's conjecture is 
equivalent to the original inequality \cite[Theorem 2]{CherkashinProzorov}. 
Consequently, the following is a corollary of our result:

\begin{corollary}
Let $G=(X\sqcup Y,E)$ be a bipartite graph. Define
\[
P_G\bigl((z_v)_{v\in V(G)}\bigr)\coloneqq
\sum_T \prod_{v\in V(G)} z_v^{\deg_T(v)-1},
\]
where the sum ranges over all spanning trees $T$ of $G$ (so $P_G$ is the zero polynomial if $G$ is disconnected). Then, for every choice of nonnegative reals $z_v$ $(v\in V(G))$, we have
\[
P_G\bigl((z_v)_{v\in V(G)}\bigr)
\Bigl(\sum_{x\in X} z_x\Bigr)
\Bigl(\sum_{y\in Y} z_y\Bigr)
\le
\prod_{v\in V(G)} \Bigl(\sum_{u\in N_G(v)} z_u\Bigr).
\]
\end{corollary}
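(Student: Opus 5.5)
The plan is to deduce the Corollary from the main theorem ($\tau(G)\le F(G)$ for every connected bipartite $G$) by the equivalence of Cherkashin and Prozorov \cite[Theorem 2]{CherkashinProzorov}. To keep the argument self-contained, I would also sketch the reduction directly, via the weighted matrix-tree theorem.

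\textbf{Trivial cases.} If $G$ is disconnected, $P_G=0$ and the right-hand side is a product of nonnegative numbers, so there is nothing to prove. The same remark shows that only connected $G$ matters.

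\textbf{Rational weights.} Let $G$ be connected. Both sides are continuous in $z$, so it suffices to treat positive rational $z_v$, and by homogeneity positive integers. Both sides have total degree $|V|$: each spanning tree has degree sum $2(|V|-1)$, so the left side has degree $(|V|-2)+2$. Given integer weights $z_v\ge 1$, form the blow-up graph $G_z$ by replacing each vertex $v$ with $z_v$ independent copies and each edge $uv$ with the complete bipartite graph $K_{z_u,z_v}$. Then $G_z$ is connected and bipartite, with parts of sizes $\sum_{x\in X}z_x$ and $\sum_{y\in Y}z_y$. A copy of $v$ has degree $\sum_{u\in N_G(v)}z_u$, so
\[
F(G_z)=\frac{\prod_{v}\bigl(\sum_{u\in N_G(v)}z_u\bigr)^{z_v}}{\bigl(\sum_X z_x\bigr)\bigl(\sum_Y z_y\bigr)}.
\]

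\textbf{Counting spanning trees of the blow-up.} This is the main obstacle. One needs to compare $\tau(G_z)$ with $P_G(z)$. The Laplacian of $G_z$ is a block matrix. Its eigenvectors that are orthogonal to constants within a block of copies of $v$ all have eigenvalue $d_v=\sum_{u\in N_G(v)}z_u$, with multiplicity $z_v-1$. The remaining eigenvalues are those of the quotient matrix $D-AZ$, where $D=\diag(d_v)$, $Z=\diag(z_v)$ and $A$ is the adjacency matrix of $G$; this matrix is similar to the weighted Laplacian with edge weights $z_uz_v$ after conjugating by $Z^{1/2}$. The weighted matrix-tree theorem gives
\[
\sum_T\prod_{uv\in T}z_uz_v=\prod_v z_v\cdot P_G(z)\cdot(\textstyle\sum_v z_v)/\!\dots,
\]
and I would carry out this normalization carefully. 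The expected outcome is
\[
\tau(G_z)=P_G(z)\prod_v d_v^{\,z_v-1}.
\]
This can be cross-checked on $G=K_{1,1}$, where the formula gives $\tau(K_{a,b})=a^{b-1}b^{a-1}$.

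\textbf{Conclusion.} Applying the main theorem to $G_z$ and cancelling the factor $\prod_v d_v^{z_v-1}$ yields exactly the claimed inequality for integer weights. Homogeneity then extends it to positive rationals, and continuity extends it to nonnegative reals.
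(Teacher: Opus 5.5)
Your proposal is correct and follows the paper's route: the paper obtains the corollary by combining Theorem~\ref{thm:main} with the Cherkashin--Prozorov equivalence, and your blow-up argument (integer weights, homogeneity of degree $|V(G)|$, then continuity) is a valid way to supply the direction of that equivalence that is needed here. One fix in your sketch: the quotient matrix is $D-AZ=Z^{-1}L_w$, which is not similar to the weighted Laplacian $L_w$. Its nonzero eigenvalues multiply to the sum of its principal minors of size $|V(G)|-1$, namely $\tau_w(G)\sum_v z_v/\prod_v z_v=P_G(z)\sum_v z_v$, where $\tau_w(G)=\sum_T\prod_{uv\in T}z_uz_v=P_G(z)\prod_v z_v$; dividing by $|V(G_z)|=\sum_v z_v$ then gives exactly your formula $\tau(G_z)=P_G(z)\prod_v d_v^{z_v-1}$.
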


The present argument is linear-algebraic in the spirit of Klee and Stamps
\cite{KleeStampsWeighted,KleeStamps}. Starting from Kirchhoff's matrix-tree
theorem, we take a Schur complement of the Laplacian matrix to reduce the
problem to one side of the bipartition. This yields the matrix
\[
L_X=A-BC^{-1}B^\top,
\]
and turns the spanning-tree bound into a determinant inequality on the
$X$-side. After a rank-one perturbation, the resulting positive definite matrix
\[
M=L_X+\frac{n}{m}J
\]
admits a decomposition as a sum of orthogonal projections indexed by the 
neighborhoods of the vertices of $Y$. This structural observation makes 
Ky Fan's maximum principle directly applicable and produces a majorization 
of the degree sequence of $X$ by the eigenvalues of $M$. The determinant 
bound then follows from the strict Schur-concavity of $\sum\log x_i$, 
and the equality case is recovered from an explicit nonnegative overlap 
defect in the projection traces.

Now let $G=(X\sqcup Y,E)$ be a bipartite graph with $|X|=m$ and $|Y|=n$. Write
\[
D(G)\coloneqq \prod_{v\in V(G)}\deg(v),
\]
where $\deg(v)$ denotes the degree of $v$, and let $\tau(G)$ be the number 
of spanning trees of $G$.
Following \cite{EhrenborgVW}, we call a bipartite graph $G=(X\sqcup Y,E)$ 
a \emph{Ferrers graph} if, after ordering
\[
X=\{x_1,\dots,x_m\},\qquad Y=\{y_1,\dots,y_n\},
\]
there exist integers
\[
m=t_1\ge t_2\ge \cdots \ge t_n\ge 1
\]
such that
\[
N(y_j)=\{x_1,\dots,x_{t_j}\}\qquad (1\le j\le n).
\]
Equivalently, the neighborhoods on one side of the bipartition are 
linearly ordered by inclusion. In terms of the biadjacency matrix, 
this means that after a suitable ordering, every column consists of 
a block of $1$'s followed by $0$'s, and the column heights are weakly 
decreasing; in other words, the pattern of $1$'s forms a Ferrers diagram. 

The main result of this paper is the following:

\begin{theorem}\label{thm:main}
Let $G=(X\sqcup Y,E)$ be a connected bipartite graph with $|X|=m$ and $|Y|=n$. 
Then
\[
\tau(G)\le \frac{1}{mn}\prod_{v\in V(G)}\deg(v).
\]
Moreover, equality holds if and only if $G$ is a Ferrers graph.
\end{theorem}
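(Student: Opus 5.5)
Following the introduction, I would start from Kirchhoff's matrix-tree theorem. Write the Laplacian in block form with respect to $X\sqcup Y$ as
\[
L=\begin{pmatrix} A & -B\\ -B^\top & C\end{pmatrix},\qquad A=\diag(\deg x),\; C=\diag(\deg y),
\]
where $B$ is the $m\times n$ biadjacency matrix. Take the Schur complement $L_X=A-BC^{-1}B^\top$. Since $\det C=\prod_{y}\deg(y)$, the determinant identity for Schur complements together with the matrix-tree theorem (via the product of the nonzero eigenvalues) gives
\[
\tau(G)=\frac{1}{m}\prod_{y\in Y}\deg(y)\cdot \frac{\det(L_X+\tfrac{n}{m}J)}{n}.
\]
Here I use that $L_X$ is itself a Laplacian-type matrix with kernel spanned by $\mathbf 1$, that $\det(L_X+cJ)=cm^2\,\tau_X$, and I will check the normalization constants by comparing with a single-edge example. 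So the theorem reduces to
\[
\det M\le \prod_{x\in X}\deg(x),\qquad M:=L_X+\tfrac{n}{m}J.
\]

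The key structural step is to decompose $M$. Let $S_y=N(y)$ and let $P_y=\frac{1}{|S_y|}\mathbf 1_{S_y}\mathbf 1_{S_y}^\top$ be the orthogonal projection onto $\mathbf 1_{S_y}$. Then $BC^{-1}B^\top=\sum_y P_y$ and $A=\sum_y \diag(\mathbf 1_{S_y})$, so
\[
L_X=\sum_y Q_y,\qquad Q_y:=\diag(\mathbf 1_{S_y})-P_y,
\]
where $Q_y$ is the projection onto vectors supported on $S_y$ and orthogonal to $\mathbf 1_{S_y}$. I would then rewrite $\frac nm J$ as $\sum_y \frac1m\mathbf 1\mathbf 1^\top$, which is $n$ copies of the projection $R$ onto $\mathbf 1$. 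Since $\mathbf 1\perp\operatorname{range}Q_y$, each $Q_y+R$ is an orthogonal projection of rank $|S_y|$, and therefore
\[
M=\sum_{y\in Y}\Pi_y,\qquad \Pi_y:=Q_y+R.
\]
Order the eigenvalues $\lambda_1\ge\dots\ge\lambda_m$ of $M$ and the degrees $d_1\ge\dots\ge d_m$ of $X$. Ky Fan's principle gives $\sum_{i\le k}\lambda_i=\max_{\dim U=k}\operatorname{tr}(M|_U)$. Choosing $U$ to be the coordinate subspace of the $k$ smallest-degree vertices, I would aim to show $\sum_{i>m-k}\lambda_i\le\sum_{i>m-k} d_i$, equivalently (since $\operatorname{tr}M=\sum_y|S_y|=\sum_x\deg x$) that $d\prec\lambda$. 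Concretely, I need
\[
\operatorname{tr}(\Pi_y|_{\RR^K})\ge |S_y\cap K|\qquad\text{for every } K\subseteq X.
\]
I expect this inequality to be the main obstacle. A direct computation gives
\[
\operatorname{tr}(\Pi_y|_K)=|S_y\cap K|-\frac{|S_y\cap K|^2}{|S_y|}+\frac{|K|}{m},
\]
so the inequality becomes $\frac{|K|}{m}\ge\frac{|S_y\cap K|^2}{|S_y|}$. This is false in general, so the naive version fails and the comparison must be organized differently. I would instead compare $\sum_{i\le k}\lambda_i$ against the $k$ largest degrees by taking $K$ to be the top-degree vertices and showing $\sum_y \operatorname{tr}(\Pi_y|_K)\ge\sum_{x\in K}\deg x$. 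This amounts to
\[
\sum_y\Bigl(\frac{|K|}{m}-\frac{|S_y\cap K|^2}{|S_y|}\Bigr)\ge 0,
\]
which is the "overlap defect." I would try to prove it using that $K$ consists of the highest-degree vertices, via a rearrangement or Chebyshev-type argument on the incidence structure. If that fails, I would try averaging over $K$ with weights.

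Given the majorization $d\prec\lambda$, strict Schur-concavity of $\sum\log$ yields $\prod\lambda_i\le\prod d_i$, with equality iff $\lambda=d$ as multisets. For the equality case, I would show that $\lambda=d$ forces every defect term to vanish for the Ky Fan-optimal subspaces. That in turn forces each $S_y\cap K$ to be either all of $S_y$ or such that the nesting condition holds, giving the neighborhoods linearly ordered by inclusion. Conversely, for Ferrers graphs the equality $\tau=F$ is the Ehrenborg--van Willigenburg theorem, or it can be checked directly since the defects vanish.
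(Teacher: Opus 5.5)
Your reduction to $\det M\le\prod_{x\in X}\deg(x)$ and your decomposition $M=\sum_y\Pi_y$ into orthogonal projections of rank $|S_y|$ agree with the paper; your $\Pi_y$ is its $Q_{T_j}$. The gap is the Ky Fan step. You test $M$ against \emph{coordinate} subspaces $\RR^K$, and no version of that can work. (Also, $(\Pi_y)_{xx}=1-1/|S_y|+1/m$ for $x\in S_y$, so the correct term is $|S_y\cap K|/|S_y|$, not its square; the conclusion below is unchanged.)

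The summed inequality you hope to prove for top-degree $K$ is false. Take the path $y_2\adj x_1\adj y_1\adj x_2$, a Ferrers graph with $\tau=F=1$. Here $\Pi_{y_1}=I$ and $\Pi_{y_2}=\frac12J$, so $M=I+\frac12J$ has eigenvalues $2,1$, equal to the degrees of $x_1,x_2$. Yet for $K=\{x_1\}$ your sum is $0+(\frac12-1)=-\frac12$, i.e.\ $M_{x_1x_1}=\frac32<2$.

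More generally, suppose $S_{y_j}=[t_j]$ and $1\le k<m$. Then $\min(t_j,k)/t_j\ge k/m$, with equality only when $t_j=m$. Hence $\operatorname{tr}(M|_{\RR^{[k]}})<\sum_{i\le k}\deg(x_i)$ for every non-complete Ferrers graph, which is exactly where equality must hold.

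Averaging over $K$ cannot rescue this. For any convex combination $D$ of rank-$k$ coordinate projections, $\operatorname{tr}(DM)$ is at most the sum of the $k$ largest diagonal entries of $M$. Since $\operatorname{diag}(M)$ is majorized by $\lambda$, in the equality case $\lambda=d$ your method would force $\operatorname{diag}(M)$ to be a rearrangement of $d$. That already fails in the example, where $\operatorname{diag}(M)=(\frac32,\frac32)$ and $d=(2,1)$.

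The missing idea is a non-coordinate test projection of the same type as the summands. Let $\Pi_K\coloneqq Q_K+R$ be the orthogonal projection onto $\{h:\operatorname{supp}h\subseteq K,\ \sum_i h_i=0\}\oplus\langle\mathbf 1\rangle$, which has rank $|K|$. A direct trace computation gives
\[
\operatorname{tr}(\Pi_K\Pi_y)=|K\cap S_y|+\frac{|K\setminus S_y|\,|S_y\setminus K|}{|K|\,|S_y|}\ge |K\cap S_y|
\]
for \emph{every} nonempty $K$. Taking $K$ to be the $k$ highest-degree vertices, Ky Fan then gives $\sum_{i\le k}\lambda_i\ge\sum_{i\le k}d_i$ at once, with no rearrangement or Chebyshev argument.

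This nonnegative term is the real overlap defect: it vanishes iff $K$ and $S_y$ are comparable by inclusion. That is what drives the equality case. If $\lambda=d$, all defects for $K=[k]$ vanish, so each $S_y$ is comparable with every initial segment and hence is one, which is the Ferrers condition. Your equality sketch is built on the wrong defect and does not reach this.

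For the converse, citing Ehrenborg--van Willigenburg is legitimate. However, ``the defects vanish'' alone only gives $\operatorname{tr}(\Pi_{[k]}M)=\sum_{i\le k}d_i$, not $\lambda=d$. The paper instead notes that the nested projections $\Pi_{[t]}$ are simultaneously diagonal in a basis adapted to their flag. Hence $M=\sum_j\Pi_{[t_j]}$ is diagonal with entries $d_1,\dots,d_m$.
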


\section{Determinant reduction}

\subsection*{Standard ingredients}
We will use the following standard results: Kirchhoff's matrix-tree theorem 
from algebraic graph theory \cite[Section~13.2]{GodsilRoyle}, the determinant 
formula for Schur complements from matrix analysis 
\cite[Section~0.8.5]{HornJohnson}, Ky Fan's maximum principle 
\cite[Exercise II.1.13]{Bhatia}, and the basic majorization principle 
for strictly concave functions \cite[Section~3C]{MarshallOlkinArnold}.

\begin{theorem}[Kirchhoff's matrix-tree theorem]\label{thm:kirchhoff}
Let $H$ be a graph with Laplacian matrix $L(H)$. Deleting the same row 
and column from $L(H)$ produces a submatrix whose determinant is independent 
of the deleted vertex and equals $\tau(H)$.
\end{theorem}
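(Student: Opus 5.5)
We will prove Theorem~\ref{thm:kirchhoff} by factoring the Laplacian through an oriented incidence matrix and applying the Cauchy--Binet formula. Let $H$ have vertex set $V$ with $|V|=p$ and edge set $E$; loops are ignored since they contribute nothing to $L(H)$. Orient each edge arbitrarily. Let $N$ be the $V\times E$ matrix with $N_{u,e}=+1$ if $u$ is the head of $e$, $N_{u,e}=-1$ if $u$ is the tail of $e$, and $0$ otherwise. A direct entrywise check gives $L(H)=NN^\top$. The diagonal entry $(u,u)$ counts the edges at $u$, and the off-diagonal entry $(u,w)$ equals minus the number of $u$--$w$ edges.

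Fix a vertex $v$ and let $N_v$ be $N$ with row $v$ deleted. Deleting row and column $v$ from $L(H)$ gives $N_vN_v^\top$. By Cauchy--Binet,
\[
\det\bigl(N_vN_v^\top\bigr)=\sum_{S\subseteq E,\ |S|=p-1}\det\bigl(N_v[S]\bigr)^2,
\]
where $N_v[S]$ is the square submatrix of $N_v$ on the columns in $S$. It therefore suffices to prove the following claim: $\det N_v[S]=\pm1$ when $S$ is the edge set of a spanning tree, and $\det N_v[S]=0$ otherwise. Granting this, the sum equals $\tau(H)$. Since this count does not refer to $v$, independence of the deleted vertex follows at once.

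\emph{Case 1: $S$ is not a spanning tree.} Then $(V,S)$ has $p-1$ edges but is not a tree, so it is disconnected. Let $C$ be a connected component of $(V,S)$ not containing $v$. Every edge of $S$ with an endpoint in $C$ has both endpoints in $C$, so its column restricted to the rows of $C$ contains exactly one $+1$ and one $-1$. Hence the sum of the rows of $N_v[S]$ indexed by $C$ is the zero vector. All these rows are present because $v\notin C$, so they are linearly dependent and the determinant vanishes.

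\emph{Case 2: $S$ is a spanning tree.} We induct on $p$. When $p=1$ the matrix is empty and its determinant is $1$. For $p\ge2$, a tree has at least two leaves, so we can pick a leaf $\ell\neq v$. Row $\ell$ of $N_v[S]$ has a single nonzero entry $\pm1$, located in the column of its pendant edge $e$. Expanding along this row gives $\pm\det N'_v[S\setminus\{e\}]$, where $N'$ is the incidence matrix of $H-\ell$. Since $S\setminus\{e\}$ is a spanning tree of $H-\ell$, induction gives $\pm1$.

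The only delicate point is the unimodularity in Case~2. Choosing a leaf different from the deleted vertex $v$ makes the induction go through cleanly, since deleting that leaf leaves $v$ in the smaller tree. Everything else is bookkeeping.
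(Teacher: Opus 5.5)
The paper gives no proof of this statement. It lists it among the standard ingredients, cites Godsil--Royle, and uses it once, in Proposition~\ref{prop:reduction}, with the row and column of $x_1$ deleted. Your argument is a correct and complete proof: the classical incidence-matrix/Cauchy--Binet proof. Every step checks out. The factorization $L(H)=NN^\top$ holds for multigraphs once loops are discarded. A spanning subgraph with $p-1$ edges that is not a tree must be disconnected, since parallel edges in $S$ form a $2$-cycle; so the rows of a component avoiding $v$ have zero sum. Choosing a leaf $\ell\neq v$ is exactly what lets the cofactor expansion reduce to the same claim for the spanning tree $S\setminus\{e\}$ of $H-\ell$.

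Compared with citing the theorem, your route gives a few things at no extra cost:
\begin{itemize}
\item The disconnected case needs no separate treatment: every $S$ falls into Case~1, giving $0=\tau(H)$.
\item Independence of the deleted vertex comes out of the combinatorial count. By contrast, the paper gets equality of cofactors for $L_X$ spectrally, from $\operatorname{adj}(L_X)=\frac1m\bigl(\prod_{i\ge2}\mu_i\bigr)J$. That linear-algebra argument would also give vertex-independence for $L(H)$ of a connected $H$, but it does not identify the common value.
\item Inserting a diagonal weight matrix gives $\det(N_vWN_v^\top)=\sum_T\prod_{e\in T}w_e$, the weighted matrix-tree theorem. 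The polynomial $P_G$ in the Corollary is, up to the factor $\prod_v z_v$, its specialization $w_{uv}=z_uz_v$.
\end{itemize}
Another common proof inducts on the number of edges via $\tau(H)=\tau(H\setminus e)+\tau(H/e)$ and linearity of the determinant in one diagonal entry. It avoids Cauchy--Binet but must pass through the multigraphs created by contraction, whereas yours works directly with edge subsets of $H$.
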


\begin{proposition}[Schur complement determinant formula]
\label{prop:schur-complement}
Let $U$ and $Z$ be square matrices, 
and let $V$ and $W$ be compatible rectangular blocks. 
If $Z$ is invertible, then
\[
\det\begin{pmatrix}
U & V\\
W & Z
\end{pmatrix}
=
\det(Z)\,\det(U-VZ^{-1}W).
\]
\end{proposition}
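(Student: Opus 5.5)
The plan is to factor the block matrix into a product of three block matrices whose determinants are easy to read off. Assume $U$ is $p\times p$ and $Z$ is $q\times q$, so that $V$ is $p\times q$ and $W$ is $q\times p$. Since $Z$ is invertible, $Z^{-1}$ exists, and we can propose the factorization
\[
\begin{pmatrix}
U & V\\
W & Z
\end{pmatrix}
=
\begin{pmatrix}
I_p & VZ^{-1}\\
0 & I_q
\end{pmatrix}
\begin{pmatrix}
U-VZ^{-1}W & 0\\
0 & Z
\end{pmatrix}
\begin{pmatrix}
I_p & 0\\
Z^{-1}W & I_q
\end{pmatrix}.
\]

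We verify it by block multiplication. First multiply the last two factors, which gives $\begin{pmatrix} U-VZ^{-1}W & 0\\ W & Z\end{pmatrix}$, using $Z\cdot Z^{-1}W=W$. Then left-multiply by the first factor. The top-left block becomes $U-VZ^{-1}W+VZ^{-1}W=U$, and the top-right block becomes $VZ^{-1}Z=V$. The bottom row is unchanged. This is purely routine bookkeeping.

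Next, take determinants and use multiplicativity of $\det$. The outer two factors are block unitriangular, so each is an ordinary triangular matrix with all diagonal entries equal to $1$. Hence each has determinant $1$. The middle factor is block diagonal, so its determinant is $\det(U-VZ^{-1}W)\det(Z)$. One way to see this is via the Leibniz expansion: any permutation that mixes the two index blocks must hit a zero entry. Alternatively, factor the middle matrix as $\diag(U-VZ^{-1}W,I_q)\cdot\diag(I_p,Z)$ and expand each along its identity rows. Combining the three determinants gives the stated formula.

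There is no real obstacle here. The only step needing any justification is the determinant of a block-diagonal or block-triangular matrix, and the Leibniz or Laplace-expansion argument above handles it. The hypothesis that $Z$ is invertible is used solely to make sense of $Z^{-1}$ in the factorization.
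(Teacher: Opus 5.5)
Your proposal is correct: the block factorization multiplies out as you say, and the determinant bookkeeping (unitriangular outer factors, block-diagonal middle factor) is sound. The paper does not prove this proposition itself; it cites it as standard from Horn and Johnson, Section~0.8.5, where it is proved by this same block-triangular factorization, so your route matches the intended one.
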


\begin{restatable}[Ky Fan's maximum principle \cite{KyFan}]{theorem}{kyfan}
\label{thm:ky-fan}
Let $S$ be a real symmetric $m\times m$ matrix with eigenvalues
\[
\theta_1\ge \theta_2\ge \cdots \ge \theta_m.
\]
For each $k\in\{1,\dots,m\}$,
\[
\sum_{i=1}^k \theta_i
=
\max\bigl\{\operatorname{tr}(PS):\ P \text{ is an orthogonal projection of rank }k\bigr\}.
\]
In particular, for every rank-$k$ orthogonal projection $P$,
\[
\operatorname{tr}(PS)\le \sum_{i=1}^k \theta_i.
\]
\end{restatable}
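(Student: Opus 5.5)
The plan is to diagonalize $S$ and reduce $\operatorname{tr}(PS)$ to a weighted sum of the eigenvalues. The weights are constrained enough that the sum is at most $\theta_1+\cdots+\theta_k$. By the spectral theorem, write $S=\sum_{i=1}^m \theta_i u_iu_i^\top$ with $u_1,\dots,u_m$ an orthonormal eigenbasis. For any orthogonal projection $P$ of rank $k$, linearity and cyclicity of the trace give
\[
\operatorname{tr}(PS)=\sum_{i=1}^m \theta_i\, u_i^\top P u_i=\sum_{i=1}^m \theta_i w_i,\qquad w_i\coloneqq u_i^\top P u_i .
\]

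The key step is to pin down the weights $w_i$. Since $P=P^\top=P^2$, we have $w_i=\|Pu_i\|^2$. Hence $0\le w_i\le \|u_i\|^2=1$, because an orthogonal projection does not increase norms. Moreover,
\[
\sum_i w_i=\operatorname{tr}\Bigl(P\sum_i u_iu_i^\top\Bigr)=\operatorname{tr}(P)=k,
\]
since the trace of a projection equals its rank.

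We then bound the linear functional $\sum_i\theta_i w_i$ over this constraint set by a direct comparison with $\theta_k$:
\[
\sum_{i=1}^m \theta_i w_i-\sum_{i=1}^k\theta_i
=\sum_{i\le k}(\theta_i-\theta_k)(w_i-1)+\sum_{i>k}(\theta_i-\theta_k)w_i
+\theta_k\Bigl(\sum_{i}w_i-k\Bigr).
\]
In the first sum each factor satisfies $\theta_i-\theta_k\ge 0$ and $w_i-1\le 0$. In the second sum each factor satisfies $\theta_i-\theta_k\le 0$ and $w_i\ge0$. The last term vanishes because $\sum_i w_i=k$. So every term is $\le 0$, which gives $\operatorname{tr}(PS)\le\sum_{i\le k}\theta_i$.

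For attainment, take $P=\sum_{i\le k}u_iu_i^\top$. This is a rank-$k$ orthogonal projection with $w_i=1$ for $i\le k$ and $w_i=0$ otherwise, so $\operatorname{tr}(PS)=\sum_{i\le k}\theta_i$ and the maximum equals this value.

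The only step needing care is the weight constraint $0\le w_i\le1$, $\sum_i w_i=k$. It is exactly where the projection hypothesis (as opposed to an arbitrary rank-$k$ matrix) is used, and it follows at once from $P^2=P=P^\top$ and $\operatorname{tr}P=\operatorname{rank}P$. The rest is a routine rearrangement.
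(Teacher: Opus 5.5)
Your proof is correct and follows the same route as the paper's appendix: you write $\operatorname{tr}(PS)=\sum_i\theta_i\|Pu_i\|^2$ in an orthonormal eigenbasis, use $0\le\|Pu_i\|^2\le 1$ and $\sum_i\|Pu_i\|^2=k$, compare against $\theta_k$, and attain the bound with the projection onto $\operatorname{span}\{u_1,\dots,u_k\}$. Your single signed identity is just a compact repackaging of the paper's two-step chain of inequalities.
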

\begin{proof}See Appendix \ref{sec:A}.\end{proof}

\begin{definition}\label{def:majorization}
For $x,y\in\RR^m$, write $x^\downarrow$ and $y^\downarrow$ for their coordinates 
rearranged in weakly decreasing order. We say that $x$ \emph{majorizes} $y$ if
\[
\sum_{i=1}^k x_i^\downarrow\ge \sum_{i=1}^k y_i^\downarrow\qquad (1\le k<m)
\]
and
\[
\sum_{i=1}^m x_i=\sum_{i=1}^m y_i.
\]
\end{definition}

\begin{proposition}\label{prop:strict-schur}
If $x,y\in(0,\infty)^m$ and $x$ majorizes $y$, then for every strictly 
concave function $\phi\colon(0,\infty)\to\RR$,
\[
\sum_{i=1}^m \phi(x_i)\le \sum_{i=1}^m \phi(y_i).
\]
If, in addition, $x$ and $y$ are both written in weakly decreasing order, 
then equality holds if and only if $x_i=y_i$ for all $i$.
\end{proposition}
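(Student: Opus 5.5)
The approach is the classical one: first deduce the inequality from Karamata's inequality (equivalently, from the Hardy--Littlewood--P\'olya characterization of majorization), and then extract the equality case by tracking where strict concavity enters. Throughout we assume $x=(x_1,\dots,x_m)$ and $y=(y_1,\dots,y_m)$ are both sorted in weakly decreasing order. This costs nothing, since both the sums $\sum\phi(x_i)$, $\sum\phi(y_i)$ and the majorization hypothesis are invariant under permutation.

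For the inequality, we use Abel summation. Set $X_k=\sum_{i\le k}x_i$ and $Y_k=\sum_{i\le k}y_i$, so that $X_k\ge Y_k$ for $k<m$ and $X_m=Y_m$. For $x_i\ne y_i$ define the slope
\[
c_i=\frac{\phi(x_i)-\phi(y_i)}{x_i-y_i},
\]
and when $x_i=y_i$ let $c_i$ be any value of a one-sided derivative of $\phi$ at $x_i$; these exist because $\phi$ is concave on an open interval. Since $\phi$ is concave, slopes of chords decrease as the endpoints move right. Both $x_i$ and $y_i$ are nonincreasing in $i$, so $c_i\le c_{i+1}$. Summation by parts then gives
\[
\sum_i(\phi(x_i)-\phi(y_i))=\sum_i c_i(x_i-y_i)=\sum_{k=1}^{m-1}(c_k-c_{k+1})(X_k-Y_k)+c_m(X_m-Y_m).
\]
Each term $(c_k-c_{k+1})(X_k-Y_k)$ is $\le 0$ and the last term vanishes, which proves the inequality.

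For the equality case, the "if" direction is trivial. For "only if", suppose equality holds but $x\ne y$, and let $k_0$ be the first index with $x_{k_0}\ne y_{k_0}$. We then have $X_{k_0}-Y_{k_0}=x_{k_0}-y_{k_0}$, which must be positive because majorization gives $X_{k_0}\ge Y_{k_0}$, with $k_0<m$ since the totals agree. Equality forces every term to vanish, so $c_{k_0}=c_{k_0+1}$.

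The main obstacle is turning this into a contradiction using strict concavity, and this is where the bookkeeping of chord slopes needs care. Consider the chord on the interval $[y_{k_0},x_{k_0}]$. By strict concavity, its slope is strictly larger than the slope of any chord or derivative taken on an interval lying at or to the right of it with positive length or some separation. The indices $k>k_0$ have $X_k-Y_k$ eventually returning to $0$ at $k=m$, so some later index $j>k_0$ has $x_j<y_j$, or more precisely the partial-sum gap must decrease. We therefore look at the first $j>k_0$ with $X_j-Y_j<X_{j-1}-Y_{j-1}$, i.e. $x_j<y_j$.

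The points involved are ordered as $x_j<y_j\le y_{k_0}<x_{k_0}$; the middle inequality holds because $y$ is sorted. The chord on $[x_j,y_j]$ and the chord on $[y_{k_0},x_{k_0}]$ meet at most at an endpoint, and strict concavity gives $c_j>c_{k_0}$. On the other hand, the gaps $X_k-Y_k$ for $k_0\le k<j$ are all positive, since they are nondecreasing from a positive start. The vanishing of each term $(c_k-c_{k+1})(X_k-Y_k)$ then forces $c_{k_0}=c_{k_0+1}=\dots=c_j$, contradicting $c_j>c_{k_0}$. Hence $x=y$.
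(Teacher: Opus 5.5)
Your proof is correct. It differs from the paper only in that the paper gives no argument here: it cites the strict Karamata inequality (strict Schur-concavity of $\sum_i\phi(x_i)$) from Marshall--Olkin--Arnold, whose textbook derivations go through the structure of majorization (doubly stochastic matrices or $T$-transforms) combined with convexity. You instead prove it from scratch using the three-chord lemma and Abel summation, and your equality analysis is sound:
\begin{itemize}
\item $k_0<m$ and $x_{k_0}>y_{k_0}$ follow from the partial sums;
\item a first $j>k_0$ with $x_j<y_j$ exists because the gap $X_k-Y_k$ must return to $0$ at $k=m$;
\item the gaps for $k_0\le k<j$ are positive, so the vanishing terms force $c_{k_0}=c_j$;
\item $x_j<y_j\le y_{k_0}<x_{k_0}$ together with strict concavity gives $c_j>c_{k_0}$.
\end{itemize}
Your version gives a short, elementary, self-contained argument. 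It works directly with the sorted vectors, which is exactly the form in which the paper uses the equality clause, and it needs no differentiability of $\phi$. The citation gives brevity.

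One detail needs tightening. When $x_i=y_i$, you cannot choose \emph{any} one-sided derivative independently at each index. Suppose $x_i=y_i=x_{i+1}=y_{i+1}=a$ and $\phi$ has a kink at $a$; for example, $\phi(t)=\log t+\min(t,1)$ at $a=1$. Taking $c_i=\phi'_-(a)$ and $c_{i+1}=\phi'_+(a)$ gives $c_i>c_{i+1}$. Then the term $(c_i-c_{i+1})(X_i-Y_i)$ can be positive, and your termwise sign argument fails in both the inequality and the equality part. Fix one convention, say $c_i=\phi'_+(x_i)$ whenever $x_i=y_i$. Then $c_i\le c_{i+1}$ holds in every case (chord/chord, chord/derivative, derivative/derivative), and the rest of your proof goes through unchanged. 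In the paper's application $\phi=\log$ is differentiable, so the issue does not arise there.
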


\begin{proof}
This is the standard strict form of Karamata's inequality, or equivalently the 
statement that $\sum_i \phi(x_i)$ is strictly Schur-concave when $\phi$ is 
strictly concave; see \cite[Section 3C]{MarshallOlkinArnold}.
\end{proof}

\subsection*{\texorpdfstring{The Schur complement on $X$}{The Schur complement on X}}
Order the vertices so that those of $X$ come first. 
The Laplacian of $G$ then has block form
\[
L(G)=
\begin{pmatrix}
A & -B\\
-B^\top & C
\end{pmatrix},
\]
where $B\in\{0,1\}^{m\times n}$ is the biadjacency matrix 
(so that $B_{ij}=1$ if $x_i\adj y_j$ and is $0$ otherwise) and
\[
A=\diag(a_1,\dots,a_m),\qquad C=\diag(b_1,\dots,b_n),
\]
with $a_i=\deg(x_i)$ and $b_j=\deg(y_j)$. 
Because $G$ is connected, every $b_j$ is positive, so $C$ is invertible. 
For each $y_j\in Y$, write
\[
T_j\coloneqq N(y_j)\subseteq X,\]
noting that $b_j=|T_j|$. For every nonempty $T\subseteq X$, 
let $\mathbf1_T\in\RR^m$ be its indicator vector and define
\[
D_T\coloneqq \diag(\mathbf1_T),\qquad 
J_T\coloneqq \mathbf1_T\mathbf1_T^{\top},\qquad 
P_T\coloneqq D_T-\frac{1}{|T|}J_T,
\]
together with the subspace
\[
H_T\coloneqq 
\Bigl\{u\in\RR^m:
\ \operatorname{supp}(u)\subseteq T,\ \sum_{x_i\in T}u_i=0\Bigr\}.
\]
For each $u\in\RR^m$, the vector $P_Tu$ is obtained by restricting $u$ to $T$, subtracting the average value on $T$, and then extending by $0$ outside $T$:
\[
(P_Tu)_i=
\begin{cases}
u_i-\bar u_T& \text{if } x_i\in T,\\ 
0& \text{if } x_i\notin T;
\end{cases}
\]
where $\bar u_T\coloneqq \frac{1}{|T|}\sum_{x_r\in T}u_r$. 
In particular, $P_Tu\in H_T$ for every $u$. Conversely, if $w\in H_T$, 
then $\bar w_T=0$ and $P_Tw=w$. Moreover, for every $w\in H_T$,
\[
\langle u-P_Tu,w\rangle
=
\sum_{x_i\in T}\bar u_T\,w_i
=
\bar u_T\sum_{x_i\in T}w_i
=
0.
\]
Thus $P_T$ is the orthogonal projection onto $H_T$. 
In particular, $P_T$ is symmetric positive semidefinite and 
$P_T\mathbf1=0$. Equivalently, it is the Laplacian of the complete graph 
on $T$, divided by $|T|$ and padded with zeroes outside $T$.

Since the $j$th column of $B$ is $\mathbf1_{T_j}$ and 
$C^{-1}=\diag(|T_1|^{-1},\dots,|T_n|^{-1})$, 
we have
\[
BC^{-1}B^\top=\sum_{j=1}^n\frac{1}{|T_j|}\mathbf1_{T_j}\mathbf1_{T_j}^\top
=\sum_{j=1}^n \frac{1}{|T_j|}J_{T_j},
\qquad
A=\sum_{j=1}^n D_{T_j};
\]
and therefore the Schur complement $L_X\coloneqq A-BC^{-1}B^\top$ satisfies
\begin{equation}\label{eq:L-sumP}
L_X
=\sum_{j=1}^n P_{T_j}.
\end{equation}
This is the first key identity of the proof: each vertex $y_j$ contributes 
the orthogonal projection onto the zero-sum subspace $H_{T_j}$ of its neighborhood.

\begin{remark}
One way to interpret $L_X$ is as follows: 
If we assign weights $u\in\mathbb R^m$ to the $X$-vertices 
and $v\in\mathbb R^n$ to the $Y$-vertices, the quadratic form of $L(G)$ is
\[
\begin{pmatrix}u\\v\end{pmatrix}^\top
L(G)\begin{pmatrix}u\\v\end{pmatrix}
=\sum_{x_iy_j\in E}(u_i-v_j)^2,
\]
which we may view as an energy to minimize for fixed $u\in\mathbb R^m$ 
and varying $v\in\mathbb R^n$. Now a vertex $y_j$ with neighborhood 
$T_j\subseteq X$ contributes
\[
\sum_{x_i\in T_j}(u_i-v_j)^2
\]
to this energy, and this contribution is minimized when $v_j$ is the 
average of its neighbors: 
\[
v_j=\bar u_{T_j}\coloneqq\frac{1}{|T_j|}\sum_{x_i\in T_j}u_i.
\]
The minimized energy from the neighborhood of vertex $y_j$ is then
\[
u^\top P_{T_j}u=\sum_{x_i\in T_j}(u_i-\bar u_{T_j})^2,
\]
and the total minimized energy is
\[
u^\top L_Xu
=\sum_{x_iy_j\in E}(u_i-\bar u_{T_j})^2
=\min_{v\in\mathbb R^n}\sum_{x_iy_j\in E}(u_i-v_j)^2.
\]
\end{remark}

\begin{lemma}\label{lem:kernel}
The matrix $L_X$ is symmetric positive semidefinite, satisfies 
$L_X\mathbf1=0$, and has kernel exactly $\langle \mathbf1\rangle$. 
In particular, its eigenvalues are
\[
0=\mu_1<\mu_2\le \cdots \le \mu_m.
\]
\end{lemma}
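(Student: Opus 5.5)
The plan is to use the decomposition \eqref{eq:L-sumP}, $L_X=\sum_{j=1}^n P_{T_j}$, throughout. Each $P_{T_j}$ is an orthogonal projection, hence symmetric and positive semidefinite. A sum of such matrices is again symmetric positive semidefinite, so $L_X$ is too. Likewise $P_{T_j}\mathbf1=0$ for every $j$ (the restriction of $\mathbf1$ to $T_j$ equals its average there), and summing gives $L_X\mathbf1=0$. So $\langle\mathbf1\rangle\subseteq\ker L_X$.

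For the reverse inclusion, take $u\in\ker L_X$. Then
\[
0=u^\top L_X u=\sum_{j=1}^n u^\top P_{T_j}u=\sum_{j=1}^n \|P_{T_j}u\|^2
=\sum_{j=1}^n\sum_{x_i\in T_j}(u_i-\bar u_{T_j})^2,
\]
where we used $P_T^2=P_T=P_T^\top$. Every term is nonnegative, so each vanishes. Hence $u$ is constant on every neighborhood $T_j=N(y_j)$.

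Next we use connectivity of $G$. Take any two vertices $x,x'\in X$. A path between them in $G$ alternates sides, so it has the form $x=x^{(0)},y_{j_1},x^{(1)},y_{j_2},\dots,x^{(k)}=x'$. Consecutive $X$-vertices $x^{(r-1)},x^{(r)}$ both lie in $T_{j_r}$, so $u$ takes the same value on them. Chaining these equalities shows $u$ is constant on all of $X$, i.e.\ $u\in\langle\mathbf1\rangle$. The degenerate case $m=1$ is trivial, since then $\RR^m=\langle\mathbf1\rangle$.

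Finally, $L_X$ is real symmetric and positive semidefinite, so its eigenvalues are real and nonnegative. Its kernel is one-dimensional, so exactly one eigenvalue is $0$ and all others are positive. This gives $0=\mu_1<\mu_2\le\cdots\le\mu_m$.

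No step is really an obstacle. The only point needing care is the path argument, which turns "constant on each $T_j$" into "constant on $X$" using that $G$ is connected.
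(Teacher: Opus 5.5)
Your proposal is correct and follows the paper's proof essentially step for step. Positive semidefiniteness and $L_X\mathbf1=0$ come from $L_X=\sum_j P_{T_j}$; the identity $0=\sum_j\|P_{T_j}u\|^2$ forces $u$ to be constant on each $T_j$; and an alternating path through connectivity then gives $u\in\langle\mathbf1\rangle$. Your explicit final deduction of $0=\mu_1<\mu_2$ from the one-dimensional kernel is a small step the paper leaves implicit.
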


\begin{proof}
By the discussion above, each $P_T$ is an orthogonal projection, hence symmetric positive semidefinite, and satisfies $P_T\mathbf1=0$. Therefore \eqref{eq:L-sumP} implies that $L_X$ is symmetric positive semidefinite and that
\[
L_X\mathbf1=0.
\]

Now let $u\in\ker L_X$. Since $L_X$ is a sum of positive semidefinite matrices,
\[
0=u^{\top}L_Xu=\sum_{j=1}^n u^{\top}P_{T_j}u=\sum_{j=1}^n \|P_{T_j}u\|^2,
\]
so $P_{T_j}u=0$ for every $j$. By the displayed formula for $P_Tu$, this means that $u$ is constant on each neighborhood $T_j$.

If $x,x'\in X$, connectedness of $G$ gives an alternating path
\[
x=z_0\adj w_1\adj z_1\adj w_2\adj\cdots\adj w_t\adj z_t=x'.
\]
Since both $z_{s-1}$ and $z_s$ lie in $N(w_s)$, the constancy of 
$u$ on each $T=N(w_s)$ implies
\[
u(z_{s-1})=u(z_s)\qquad (1\le s\le t),
\]
and hence $u(x)=u(x')$. Thus $u$ is constant on all of $X$, 
proving $\ker L_X=\langle \mathbf1\rangle$.
\end{proof}

\begin{proposition}\label{prop:reduction}
Let $M\coloneqq L_X+\frac{n}{m}J$, where $J$ is the $m\times m$ all-ones matrix. Then
\begin{equation}\label{eq:tau-detM}
\tau(G)=\frac{\prod_{j=1}^n b_j}{mn}\det M.
\end{equation}
Consequently, the inequality in Theorem~\ref{thm:main} is equivalent to
\begin{equation}\label{eq:det-bound}
\det M\le \prod_{i=1}^m a_i.
\end{equation}
\end{proposition}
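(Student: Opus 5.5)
To prove \eqref{eq:tau-detM}, I would apply Kirchhoff's matrix-tree theorem (Theorem~\ref{thm:kirchhoff}) to a suitable principal minor of $L(G)$, and then use the Schur complement formula (Proposition~\ref{prop:schur-complement}) to eliminate the $Y$-block. Deleting the row and column of a vertex $x_1\in X$ leaves the block matrix
\[
\begin{pmatrix} A' & -B'\\ -B'^\top & C\end{pmatrix},
\]
where $A'$ and $B'$ are $A$ and $B$ with the first row (and, for $A$, the first column) removed. The block $C$ is still the full invertible diagonal matrix. Proposition~\ref{prop:schur-complement} then gives
\[
\tau(G)=\det C\cdot\det\bigl(A'-B'C^{-1}B'^\top\bigr)=\Bigl(\prod_j b_j\Bigr)\det (L_X)',
\]
where $(L_X)'$ is $L_X$ with its first row and column deleted. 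This holds because deleting a row and column of $X$ commutes with forming the Schur complement over the $Y$-block.

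It remains to relate the principal minor $\det(L_X)'$ to $\det M$. Since $L_X$ is symmetric with $L_X\mathbf 1=0$ and kernel exactly $\langle\mathbf 1\rangle$ (Lemma~\ref{lem:kernel}), it has an orthonormal eigenbasis containing $\mathbf 1/\sqrt m$, with eigenvalue $0$. The matrix $\frac nm J=n\cdot\frac{\mathbf 1\mathbf 1^\top}{m}$ acts as $n$ on $\mathbf 1/\sqrt m$ and as $0$ on $\mathbf 1^\perp$. Hence $M$ has eigenvalues $n,\mu_2,\dots,\mu_m$, and
\[
\det M=n\prod_{i\ge2}\mu_i.
\]
Next I would use the standard fact that for a symmetric matrix with zero row sums, every cofactor equals $\frac1m\prod_{i\ge2}\mu_i$. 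To see this, compare the coefficient of $t$ in $\det(tI-L_X)$, which equals $(-1)^{m-1}$ times the sum of the $(m-1)$-principal minors, with $(-1)^{m-1}\prod_{i\ge2}\mu_i$. Then observe that all principal cofactors are equal. That equality holds because the adjugate of a matrix whose row and column sums vanish is a constant multiple of $J$: it has rank at most one with kernel complement spanned by $\mathbf 1$ on both sides. Combining these facts,
\[
\det(L_X)'=\frac1m\prod_{i\ge2}\mu_i=\frac{\det M}{mn},
\]
which gives \eqref{eq:tau-detM}.

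For the equivalence, substitute \eqref{eq:tau-detM} into $\tau(G)\le \frac1{mn}\prod_i a_i\prod_j b_j$. After cancelling the common factor $\frac{\prod_j b_j}{mn}>0$ (all $b_j\ge1$ by connectedness), this becomes \eqref{eq:det-bound}. Equality in one statement likewise corresponds to equality in the other.

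The only step needing care is the cofactor identity, specifically the equality of all principal cofactors, which makes the choice of deleted vertex irrelevant. I would handle it via the adjugate argument above. Alternatively, one could note directly that $L_X$ is the Laplacian of a weighted graph on $X$, apply the weighted matrix-tree theorem, and avoid the issue altogether.
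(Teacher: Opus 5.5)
Your proposal is correct and follows the paper's proof. You delete $x_1$, apply Kirchhoff and the Schur complement over the invertible $C$-block to get $\tau(G)=\det(C)\det\bigl((L_X)_{\hat 1}\bigr)$, and then use that $M$ has eigenvalues $n,\mu_2,\dots,\mu_m$ and that every cofactor of $L_X$ equals $\frac1m\prod_{i\ge2}\mu_i$. The only difference is minor: you get the cofactor value from the linear coefficient of the characteristic polynomial plus the fact that $\operatorname{adj}(L_X)$ is a multiple of $J$ (since $L_X\operatorname{adj}(L_X)=\operatorname{adj}(L_X)L_X=0$), whereas the paper gets both at once by conjugating $\operatorname{adj}(\diag(0,\mu_2,\dots,\mu_m))$ by the orthogonal eigenbasis matrix.
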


\begin{proof}
Write the Laplacian of $G$ in the block form introduced above, 
and remove the row and column corresponding to $x_1$. 
By Kirchhoff's matrix-tree theorem (Theorem~\ref{thm:kirchhoff}),
\[
\tau(G)
=\det\begin{pmatrix}
A_{\hat 1} & -B_{\hat 1}\\
-B_{\hat 1}^{\top} & C
\end{pmatrix}.
\]
Since $C$ is invertible, the Schur complement determinant formula 
(Proposition~\ref{prop:schur-complement}) applied to the $C$-block gives
\[
\tau(G)
=\det(C)\,\det\bigl(A_{\hat 1}-B_{\hat 1}C^{-1}B_{\hat 1}^{\top}\bigr).
\]
By definition, $L_X=A-BC^{-1}B^\top$, so
\[
A_{\hat 1}-B_{\hat 1}C^{-1}B_{\hat 1}^{\top}
=\bigl(A-BC^{-1}B^\top\bigr)_{\hat 1}
=\bigl(L_X\bigr)_{\hat 1}.
\]
Hence
\begin{equation}\label{eq:kirchhoff-schur-step}
\tau(G)=\det(C)\,\det\bigl((L_X)_{\hat 1}\bigr).
\end{equation}

Now let $0,\mu_2,\dots,\mu_m$ be the eigenvalues of $L_X$, as in 
Lemma~\ref{lem:kernel}. Since $L_X$ is symmetric and 
$\ker L_X=\langle \mathbf1\rangle$, we may choose an orthonormal eigenbasis 
whose first vector is $m^{-1/2}\mathbf1$. Let $Q$ be the orthogonal matrix 
whose columns are this basis. Then
\[
L_X=Q\diag(0,\mu_2,\dots,\mu_m)Q^\top.
\]
Since the adjugate matrix (i.e., the transpose of the cofactor matrix) 
satisfies $\operatorname{adj}(SDS^{-1})=S\,\operatorname{adj}(D)\,S^{-1}$, we obtain
\[
\operatorname{adj}(L_X)
=Q\diag\Bigl(\prod_{i=2}^m\mu_i,0,\dots,0\Bigr)Q^\top.
\]
If $u=m^{-1/2}\mathbf1$, then $u=Qe_1$, so
\[
Q\diag\Bigl(\prod_{i=2}^m\mu_i,0,\dots,0\Bigr)Q^\top
=\Bigl(\prod_{i=2}^m\mu_i\Bigr)uu^\top.
\]
Since $uu^\top=\frac1m J$, this gives
\[
\operatorname{adj}(L_X)=\frac1m\Bigl(\prod_{i=2}^m\mu_i\Bigr)J.
\]
So every cofactor of $L_X$ is the same, and in particular
\[
\det\bigl((L_X)_{\hat 1}\bigr)=\frac{1}{m}\prod_{i=2}^m \mu_i.
\]

Finally, $J\mathbf1=m\mathbf1$, while $J$ vanishes on $\mathbf1^\perp$. 
Thus on the line $\langle \mathbf1\rangle$, the matrix
\[
M=L_X+\frac{n}{m}J
\]
acts by $n$, and on $\mathbf1^\perp$ it agrees with $L_X$. 
Hence the eigenvalues of $M$ are
\[
n,\mu_2,\dots,\mu_m,
\]
which are all strictly positive by Lemma~\ref{lem:kernel};
in particular, $M$ is positive definite. So
\[
\det M=n\prod_{i=2}^m\mu_i
=mn\,\det\bigl((L_X)_{\hat 1}\bigr).
\]
Combining this with \eqref{eq:kirchhoff-schur-step} 
and $\det(C)=\prod_{j=1}^n b_j$, we get
\[
\tau(G)
=\det(C)\,\det\bigl((L_X)_{\hat 1}\bigr)
=\Bigl(\prod_{j=1}^n b_j\Bigr)\det\bigl((L_X)_{\hat 1}\bigr)
=\frac{\prod_{j=1}^n b_j}{mn}\det M,
\]
which is \eqref{eq:tau-detM}. 
The equivalence with \eqref{eq:det-bound} is immediate.
\end{proof}

\section{Projection identities}

For every nonempty $T\subseteq X$, define
\[
Q_T\coloneqq P_T+\frac{1}{m}J.
\]
Recall that $H_T\subseteq\RR^m$ was defined in the previous section.

\begin{lemma}\label{lem:Q-proj}
For every nonempty $T\subseteq X$, the matrix $Q_T$ is the 
orthogonal projection onto
\[
H_T\oplus \langle \mathbf1\rangle.
\]
In particular, $Q_T$ has rank $|T|$.
\end{lemma}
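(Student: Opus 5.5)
The plan is to exhibit $Q_T$ as a sum of two orthogonal projections onto mutually orthogonal subspaces. Such a sum is again an orthogonal projection, namely onto the direct sum of the two subspaces.

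First, $\frac1m J=uu^\top$ with $u=m^{-1/2}\mathbf1$ a unit vector, so $\frac1m J$ is the orthogonal projection onto $\langle\mathbf1\rangle$. Second, by the discussion preceding Lemma~\ref{lem:kernel}, $P_T$ is the orthogonal projection onto $H_T$. Third, $H_T\perp\langle\mathbf1\rangle$, because every $w\in H_T$ is supported in $T$ and sums to zero there, so $\langle w,\mathbf1\rangle=\sum_{x_i\in T}w_i=0$.

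In matrix terms, the orthogonality gives $P_T\cdot\frac1mJ=\frac1m(P_T\mathbf1)\mathbf1^\top=0$, using $P_T\mathbf1=0$. Transposing, and using that both matrices are symmetric, $\frac1mJ\,P_T=0$ as well.

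It then follows that $Q_T$ is symmetric and idempotent:
\[
Q_T^2=P_T^2+P_T\tfrac1mJ+\tfrac1mJP_T+\bigl(\tfrac1mJ\bigr)^2=P_T+\tfrac1mJ=Q_T.
\]
So $Q_T$ is an orthogonal projection.

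Next we identify its range. Clearly $\operatorname{range}Q_T\subseteq H_T+\langle\mathbf1\rangle$. Conversely, $Q_T$ fixes each $w\in H_T$, since $P_Tw=w$ and $Jw=0$. It also fixes $\mathbf1$, since $P_T\mathbf1=0$ and $\frac1mJ\mathbf1=\mathbf1$. Hence the range is exactly $H_T\oplus\langle\mathbf1\rangle$, and the sum is direct by the orthogonality above.

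For the rank, we compute $\dim H_T=|T|-1$. Indeed, $H_T$ is the kernel of the nonzero linear functional $u\mapsto\sum_{x_i\in T}u_i$ on the $|T|$-dimensional space of vectors supported in $T$; this uses that $T$ is nonempty. Therefore $\operatorname{rank}Q_T=(|T|-1)+1=|T|$.

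No step is a real obstacle. The only point needing care is the cross-term cancellation, and that follows directly from $P_T\mathbf1=0$.
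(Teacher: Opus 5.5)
Your proof is correct and takes the same approach as the paper. Both write $Q_T$ as the sum of the orthogonal projections $P_T$ and $\frac1m J$ onto the mutually orthogonal subspaces $H_T$ and $\langle\mathbf1\rangle$, and read off the rank as $(|T|-1)+1$. You additionally spell out details the paper leaves implicit: the vanishing cross terms $P_TJ=JP_T=0$, the identification of the range, and $\dim H_T=|T|-1$.
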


\begin{proof}
By the discussion after the definition of $P_T$, the matrix $P_T$ is the orthogonal projection onto $H_T$. Also, $\frac{1}{m}J$ is the orthogonal projection onto $\langle \mathbf1\rangle$. Since $H_T\perp\langle \mathbf1\rangle$, their sum
\[
Q_T=P_T+\frac{1}{m}J
\]
is the orthogonal projection onto $H_T\oplus\langle\mathbf1\rangle$. Its rank is
\[
\dim(H_T)+1=(|T|-1)+1=|T|.
\]
\end{proof}

\begin{lemma}\label{lem:M-sumQ}
One has
\begin{equation}\label{eq:M-sumQ}
M=\sum_{j=1}^n Q_{T_j}.
\end{equation}
\end{lemma}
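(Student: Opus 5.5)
The identity \eqref{eq:M-sumQ} follows by expanding the right-hand side and comparing it with the definition of $M$ in Proposition~\ref{prop:reduction}. By the definition $Q_T = P_T + \frac1m J$, summing over the $n$ neighborhoods $T_1,\dots,T_n$ gives
\[
\sum_{j=1}^n Q_{T_j} = \sum_{j=1}^n P_{T_j} + \sum_{j=1}^n \frac1m J = \sum_{j=1}^n P_{T_j} + \frac{n}{m}J .
\]
Each $T_j$ is nonempty because $G$ is connected, so every $b_j = |T_j|$ is positive. Hence each $Q_{T_j}$ is well defined.

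Next I will invoke the first key identity \eqref{eq:L-sumP}, namely $L_X = \sum_{j=1}^n P_{T_j}$. With it, the right-hand side becomes $L_X + \frac{n}{m}J$, which is $M$ by definition. This completes the argument.

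There is no genuine obstacle here. The only point to keep in mind is that the rank-one correction $\frac{n}{m}J$ splits into exactly $n$ copies of $\frac1m J$, one for each vertex $y_j$. This is precisely why the perturbation constant in Proposition~\ref{prop:reduction} was chosen to be $n/m$: it turns $M$ into a sum of the orthogonal projections $Q_{T_j}$ given by Lemma~\ref{lem:Q-proj}.
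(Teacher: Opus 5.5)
Your proof is correct and is essentially the paper's argument: substitute \eqref{eq:L-sumP} for $L_X$ and split $\frac{n}{m}J$ into $n$ copies of $\frac1m J$, one for each $Q_{T_j}$. Your extra check that each $T_j$ is nonempty by connectedness, so that each $Q_{T_j}$ is well defined, is a small but welcome addition.
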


\begin{proof}
By \eqref{eq:L-sumP},
\[
L_X=\sum_{j=1}^n P_{T_j}.
\]
Adding
\[
\frac{n}{m}J=\sum_{j=1}^n \frac{1}{m}J
\]
termwise gives \eqref{eq:M-sumQ}.
\end{proof}

\begin{remark}
Equation~\eqref{eq:M-sumQ} is the second key identity of the proof.
It rewrites $M$ as a sum of neighborhood projections. Consequently, for
every nonempty subset $I\subseteq X$,
\[
\operatorname{tr}(Q_I M)=\sum_{j=1}^n \operatorname{tr}(Q_IQ_{T_j}).
\]
Lemma~\ref{lem:overlap} will show that each summand is at least
$|I\cap T_j|$, and hence
\[
\operatorname{tr}(Q_I M)\ge \sum_{j=1}^n |I\cap T_j|
= \sum_{x_i\in I} a_i.
\]
This is the bridge from the projection decomposition of $M$ to the
degree sums on the $X$-side used in the majorization argument.
\end{remark}

\begin{lemma}\label{lem:overlap}
For nonempty subsets $I,T\subseteq X$,
\begin{equation}\label{eq:overlap}
\operatorname{tr}(Q_IQ_T)
=
|I\cap T|+\frac{|I\setminus T||T\setminus I|}{|I||T|}
\ge |I\cap T|.
\end{equation}
Equality holds if and only if $I\subseteq T$ or $T\subseteq I$.
\end{lemma}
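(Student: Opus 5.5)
The plan is to expand the product of the two projections and compute each trace term directly from the explicit matrices. Write $Q_I=D_I-\frac{1}{|I|}J_I+\frac1m J$ and similarly for $Q_T$. Since $P_I\mathbf1=0$ and $P_T\mathbf1=0$, the cross terms vanish, $P_IJ=0$ and $JP_T=0$. Hence
\[
Q_IQ_T=P_IP_T+\tfrac{1}{m^2}J^2=P_IP_T+\tfrac1m J,
\]
so $\operatorname{tr}(Q_IQ_T)=\operatorname{tr}(P_IP_T)+1$. It therefore suffices to compute $\operatorname{tr}(P_IP_T)$.

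Next expand $P_IP_T=(D_I-\frac1{|I|}J_I)(D_T-\frac1{|T|}J_T)$ and take traces termwise, using $\operatorname{tr}(\mathbf1_A\mathbf1_A^\top\mathbf1_B\mathbf1_B^\top)=|A\cap B|^2$ and $\operatorname{tr}(D_A J_B)=|A\cap B|$. Writing $k=|I\cap T|$, $p=|I|$, $q=|T|$, this gives
\[
\operatorname{tr}(P_IP_T)=k-\frac{k}{q}-\frac{k}{p}+\frac{k^2}{pq}.
\]
Adding $1$ yields
\[
\operatorname{tr}(Q_IQ_T)=k+\frac{pq-kq-kp+k^2}{pq}=k+\frac{(p-k)(q-k)}{pq}.
\]
Since $p-k=|I\setminus T|$ and $q-k=|T\setminus I|$, this is exactly \eqref{eq:overlap}.

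The inequality follows because both factors in the numerator are nonnegative. Equality holds iff $(p-k)(q-k)=0$, that is, iff $I\setminus T=\emptyset$ or $T\setminus I=\emptyset$; this is precisely $I\subseteq T$ or $T\subseteq I$.

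There is no real obstacle here. The only care needed is the bookkeeping of the four trace terms and checking that the $J$ cross terms genuinely vanish; the latter relies on $P_T\mathbf1=0$ from the earlier discussion.
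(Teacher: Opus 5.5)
Your proposal is correct and follows the paper's proof essentially line for line. You reduce to $\operatorname{tr}(P_IP_T)+1$ via $P_IJ=JP_T=0$ and $J^2=mJ$ (where $JP_T=0$ uses the symmetry of $P_T$ together with $P_T\mathbf1=0$), compute the four trace terms, and factor $1-\frac{k}{p}-\frac{k}{q}+\frac{k^2}{pq}=\frac{(p-k)(q-k)}{pq}$ to read off both the inequality and the equality case.
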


\begin{proof}
Write
\[
p\coloneqq |I|,\qquad q\coloneqq |T|,\qquad r\coloneqq |I\cap T|.
\]
Because $P_IJ=JP_T=0$ and $J^2=mJ$,
\[
\operatorname{tr}(Q_IQ_T)=\operatorname{tr}(P_IP_T)+1.
\]
Now
\[
P_IP_T=\Bigl(D_I-\frac{1}{p}J_I\Bigr)\Bigl(D_T-\frac{1}{q}J_T\Bigr),
\]
and a direct trace computation gives
\[
\operatorname{tr}(D_ID_T)=r,
\qquad
\operatorname{tr}(D_IJ_T)=r,
\qquad
\operatorname{tr}(J_ID_T)=r,
\qquad
\operatorname{tr}(J_IJ_T)=r^2.
\]
Hence
\[
\operatorname{tr}(P_IP_T)=r-\frac{r}{p}-\frac{r}{q}+\frac{r^2}{pq},
\]
so
\[
\operatorname{tr}(Q_IQ_T)
=
1+r-\frac{r}{p}-\frac{r}{q}+\frac{r^2}{pq}
=
|I\cap T|+\frac{|I\setminus T||T\setminus I|}{|I||T|}.
\]
The error term is nonnegative and vanishes exactly when $r=p$ or $r=q$; 
that is, exactly when $I\subseteq T$ or $T\subseteq I$.
\end{proof}

\begin{remark}
The extra term
\[
\epsov(I,T)\coloneqq \frac{|I\setminus T||T\setminus I|}{|I||T|}
\]
may be viewed as an \emph{overlap defect}: it is symmetric, nonnegative, 
and vanishes exactly when $I$ and $T$ are comparable by inclusion. 
This defect will be the mechanism behind proving the equality case, 
as Ferrers graphs are precisely the connected bipartite graphs such that 
neighborhoods of the vertices on one side can be linearly ordered by inclusion.
\end{remark}

The next proposition isolates the main spectral mechanism of the argument:
the projection decomposition of $M$, together with Ky Fan's maximum principle,
forces the eigenvalues of $M$ to majorize the degree sequence of $X$.

\begin{proposition}\label{prop:majorization}
Let
\[
\lambda_1\ge \lambda_2\ge \cdots \ge \lambda_m
\]
be the eigenvalues of $M$, and relabel the vertices of $X$ so that
\[
a_1\ge a_2\ge \cdots \ge a_m.
\]
For each $k\in\{1,\dots,m\}$, write
\[
[k]\coloneqq \{x_1,\dots,x_k\}.
\]
Then
\begin{equation}\label{eq:partial-sum-ineq}
\sum_{i=1}^k \lambda_i
\ge
\sum_{i=1}^k a_i+\sum_{j=1}^n \epsov([k],T_j)
\ge
\sum_{i=1}^k a_i
\qquad (1\le k<m),
\end{equation}
and
\[
\sum_{i=1}^m \lambda_i=\sum_{i=1}^m a_i.
\]
Consequently, $(\lambda_1,\dots,\lambda_m)$ majorizes $(a_1,\dots,a_m)$.
\end{proposition}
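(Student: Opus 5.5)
The plan is to apply Ky Fan's maximum principle (Theorem~\ref{thm:ky-fan}) to $S=M$ with the test projection $P=Q_{[k]}$. By Lemma~\ref{lem:Q-proj}, $Q_{[k]}$ is an orthogonal projection of rank $|[k]|=k$. Ky Fan then gives
\[
\sum_{i=1}^k\lambda_i\ \ge\ \operatorname{tr}(Q_{[k]}M).
\]

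Next, expand the trace using the decomposition $M=\sum_j Q_{T_j}$ of Lemma~\ref{lem:M-sumQ} together with linearity of the trace. Applying Lemma~\ref{lem:overlap} to each term gives
\[
\operatorname{tr}(Q_{[k]}M)=\sum_{j=1}^n \operatorname{tr}(Q_{[k]}Q_{T_j})
=\sum_{j=1}^n |[k]\cap T_j|+\sum_{j=1}^n\epsov([k],T_j).
\]
A double count of edges between $[k]$ and $Y$ shows that
\[
\sum_j|[k]\cap T_j|=\sum_{i\le k}a_i,
\]
since each $x_i\in[k]$ lies in exactly $a_i$ of the sets $T_j$. This yields the first inequality in \eqref{eq:partial-sum-ineq}. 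The second inequality holds because each $\epsov$ is nonnegative, which is part of Lemma~\ref{lem:overlap}.

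For the trace identity, note that $\sum_i\lambda_i=\operatorname{tr}M=\sum_j\operatorname{tr}Q_{T_j}=\sum_j|T_j|$, using again that $Q_{T_j}$ is a projection of rank $|T_j|$. Counting edges gives $\sum_j|T_j|=\sum_j b_j=|E|=\sum_i a_i$. Alternatively, take $k=m$ in the computation above: $Q_{[m]}=I$ and every $\epsov(X,T_j)$ vanishes.

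Majorization now follows directly from Definition~\ref{def:majorization}. The $\lambda_i$ are already listed in decreasing order. We relabeled so the $a_i$ are in decreasing order as well, so the partial sums $\sum_{i\le k}a_i$ are exactly the top-$k$ sums of the degree sequence. No step is a real obstacle. The only points needing care are that Ky Fan is stated for the top-$k$ eigenvalues of a symmetric matrix, which applies since $M$ is symmetric, and that the relabeling of $X$ is harmless: it conjugates $M$ by a permutation matrix and so leaves its spectrum unchanged.
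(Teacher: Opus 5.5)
Your proposal is correct and matches the paper's proof essentially step for step. Both apply Ky Fan with the test projection $Q_{[k]}$, expand $\operatorname{tr}(Q_{[k]}M)$ via $M=\sum_j Q_{T_j}$ and the overlap lemma, double count to get $\sum_j|[k]\cap T_j|=\sum_{i\le k}a_i$, and compute $\operatorname{tr}M=\sum_j|T_j|$ for the case $k=m$.
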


\begin{proof}
First suppose that $1\le k<m$. By Lemma~\ref{lem:Q-proj}, the matrix 
$Q_{[k]}$ is an orthogonal projection of rank $k$. Applying Ky Fan's 
maximum principle (Theorem~\ref{thm:ky-fan}) to the symmetric matrix $M$ gives
\[
\sum_{i=1}^k \lambda_i\ge \operatorname{tr}(Q_{[k]}M).
\]
Using Lemma~\ref{lem:M-sumQ} and Lemma~\ref{lem:overlap},
\[
\operatorname{tr}(Q_{[k]}M)
=\sum_{j=1}^n \operatorname{tr}(Q_{[k]}Q_{T_j})
=\sum_{j=1}^n \bigl|[k]\cap T_j\bigr|
 +\sum_{j=1}^n \epsov([k],T_j).
\]
The first sum is the total degree of the first $k$ vertices of $X$:
\[
\sum_{j=1}^n \bigl|[k]\cap T_j\bigr|=\sum_{i=1}^k a_i,
\]
because every edge incident to one of $x_1,\dots,x_k$ is counted exactly once. 
This proves~\eqref{eq:partial-sum-ineq}.
Now suppose $k=m$. Then we have
\[
\sum_{i=1}^m \lambda_i
=\operatorname{tr}(M)
=\sum_{j=1}^n \operatorname{tr}(Q_{T_j})
=\sum_{j=1}^n |T_j|
=\sum_{i=1}^m a_i,
\]
so it follows that $(\lambda_1,\dots,\lambda_m)$ majorizes $(a_1,\dots,a_m)$.
\end{proof}

\section{Proof of the theorem}

\begin{proof}[Proof of Theorem~\ref{thm:main}]
Let
\[
\lambda_1\ge \lambda_2\ge \cdots \ge \lambda_m
\]
be the eigenvalues of $M$, and relabel the vertices of $X$ so that
\[
a_1\ge a_2\ge \cdots \ge a_m.
\]
For each $k\in\{1,\dots,m\}$, write
\[
[k]\coloneqq \{x_1,\dots,x_k\}.
\]
By Proposition~\ref{prop:majorization}, 
the vector $(\lambda_1,\dots,\lambda_m)$ majorizes 
the degree vector $(a_1,\dots,a_m)$.

By Lemma~\ref{lem:kernel}, $L_X$ has eigenvalues $0,\mu_2,\dots,\mu_m$, 
and since $J$ acts by $m$ on $\langle\mathbf1\rangle$ and by $0$ on 
$\mathbf1^\perp$, $M$ has eigenvalues $n,\mu_2,\dots,\mu_m$, and is 
thus positive definite, so we have $\lambda_i>0$ for every $i$. 
Because $G$ is connected, every vertex degree is positive, so $a_i>0$ 
for every $i$ as well. Thus both vectors lie in $(0,\infty)^m$, and 
Proposition~\ref{prop:strict-schur} applied with the strictly concave 
function $\phi(t)=\log t$ gives
\[
\sum_{i=1}^m \log \lambda_i\le \sum_{i=1}^m \log a_i.
\]
Equivalently,
\[
\det M=\prod_{i=1}^m \lambda_i\le \prod_{i=1}^m a_i.
\]
By Proposition~\ref{prop:reduction},
\[
\tau(G)=\frac{\prod_{j=1}^n b_j}{mn}\det M
\le
\frac{1}{mn}\Bigl(\prod_{i=1}^m a_i\Bigr)\Bigl(\prod_{j=1}^n b_j\Bigr)
=
\frac{D(G)}{mn},
\]
which proves the inequality.

Now suppose equality holds in Theorem~\ref{thm:main}. 
Then equality holds in \eqref{eq:det-bound}, so
\[
\sum_{i=1}^m \log \lambda_i=\sum_{i=1}^m \log a_i.
\]
We already know from Proposition~\ref{prop:majorization} that 
$(\lambda_1,\dots,\lambda_m)$ majorizes $(a_1,\dots,a_m)$, and both 
vectors were arranged in weakly decreasing order at the start of the proof. 
Therefore the equality statement in Proposition~\ref{prop:strict-schur} 
yields coordinatewise equality,
\[
\lambda_i=a_i\qquad (1\le i\le m).
\]
Returning to \eqref{eq:partial-sum-ineq}, we get
\[
\sum_{j=1}^n \epsov([k],T_j)=0
\qquad (1\le k<m).
\]
Each summand is nonnegative, so every summand vanishes:
\[
\epsov([k],T_j)=0
\qquad\text{for all }j\text{ and }1\le k<m.
\]
By Lemma~\ref{lem:overlap}, each $T_j$ is comparable by inclusion with every 
initial segment $[k]$.

We now show that this forces $T_j$ itself to be an initial segment. Fix $j$ and let
\[
t\coloneqq \max\{i:\ x_i\in T_j\}.
\]
Then $T_j\subseteq [t]$. If $t=1$, then $T_j\neq\varnothing$ implies $T_j=[1]$. 
Assume from now on that $t\ge 2$. Since $x_t\in T_j$, the inclusion 
$T_j\subseteq [t-1]$ is impossible. Comparability of $T_j$ with $[t-1]$ 
therefore gives
\[
[t-1]\subseteq T_j.
\]
Together with $x_t\in T_j$, this yields $T_j=[t]$. Thus every neighborhood of 
a $Y$-vertex is an initial segment of $X$. Reordering the vertices of $Y$ so 
that these initial segments appear in decreasing size, 
we conclude that $G$ is Ferrers.

Conversely, suppose $G$ is Ferrers. Then after ordering $X$ and $Y$ we have
\[
T_j=[t_j]\qquad\text{with}\qquad t_1\ge\cdots\ge t_n\ge 1.
\]
The subspaces
\[
\operatorname{im}(Q_{[1]})\subset \operatorname{im}(Q_{[2]})
\subset \cdots \subset \operatorname{im}(Q_{[m]})=\RR^m
\]
form a complete flag, and $\dim \operatorname{im}(Q_{[t]})=t$ by 
Lemma~\ref{lem:Q-proj}. Choose an orthonormal basis $u_1,\dots,u_m$ adapted 
to this flag, meaning that $u_1,\dots,u_t$ span $\operatorname{im}(Q_{[t]})$ 
for every~$t$. In this basis each $Q_{[t]}$ is diagonal with exactly the 
first~$t$ diagonal entries equal to~$1$. Hence
\[
M=\sum_{j=1}^n Q_{T_j}=\sum_{j=1}^n Q_{[t_j]}
\]
is diagonal, with diagonal entries
\[
\#\{j:\ t_j\ge 1\},\ \#\{j:\ t_j\ge 2\},\ \dots,\ \#\{j:\ t_j\ge m\}.
\]
But $\#\{j: t_j\ge r\}$ is exactly the degree of $x_r$, since $x_r$ is 
adjacent precisely to those~$y_j$ with $t_j\ge r$. Therefore the diagonal 
entries are $a_1,\dots,a_m$, so
\[
\det M=\prod_{i=1}^m a_i.
\]
Proposition~\ref{prop:reduction} now gives equality in Theorem~\ref{thm:main}.
\end{proof}

\section*{Acknowledgements}
The author used GPT-5.4 Pro during the development of this work to 
explore possible proof strategies, test intermediate formulations, 
and assist with exposition. All mathematical arguments and claims in 
the final manuscript were independently verified by the author, who 
takes full responsibility for the paper. 
The Lean~4 formalization is available from
\url{https://github.com/boonsuan/FerrersBound}.

\newpage\appendix 
\section{Proof of Ky Fan's maximum principle} \label{sec:A}
For convenience, we provide a relatively self-contained proof of Ky Fan's 
maximum principle, in the form that we use:
\kyfan*   
\begin{proof}
Let $u_1,\dots,u_m$ be an orthonormal basis of eigenvectors of $S$,
with
\[
Su_i=\theta_i u_i\qquad (1\le i\le m).
\]
Fix a rank-$k$ orthogonal projection $P$, and define
\[
d_i\coloneqq\langle Pu_i,u_i\rangle \qquad (1\le i\le m).
\]
Since $P$ is an orthogonal projection, $P=P^\top=P^2$, and therefore
\[
d_i=\langle P^2u_i,u_i\rangle=\langle Pu_i,Pu_i\rangle=\|Pu_i\|^2.
\]
Hence
\[
0\le d_i\le 1 \qquad (1\le i\le m).
\]
Also, because $(u_i)_{i=1}^m$ is an orthonormal basis,
\[
\sum_{i=1}^m d_i
=\sum_{i=1}^m \langle Pu_i,u_i\rangle
=\operatorname{tr}(P)
=k.
\]

Now compute $\operatorname{tr}(PS)$ in the eigenbasis $(u_i)$:
\[
\operatorname{tr}(PS)
=\sum_{i=1}^m \langle PSu_i,u_i\rangle
=\sum_{i=1}^m \theta_i \langle Pu_i,u_i\rangle
=\sum_{i=1}^m \theta_i d_i.
\]
Therefore
\begin{align*}
\operatorname{tr}(PS)
&=\sum_{i=1}^k \theta_i d_i+\sum_{i=k+1}^m \theta_i d_i
\le\sum_{i=1}^k \theta_i d_i+\theta_k\sum_{i=k+1}^m d_i\\
&\qquad=\sum_{i=1}^k \theta_i d_i+\theta_k\sum_{i=1}^k(1-d_i)
=\sum_{i=1}^k \bigl(\theta_i d_i+\theta_k(1-d_i)\bigr)\\
&\qquad\le\sum_{i=1}^k \bigl(\theta_i d_i+\theta_i(1-d_i)\bigr)
=\sum_{i=1}^k \theta_i,
\end{align*}
where in the first line we used $\theta_i\le \theta_k$ for $i>k$, in the
second we used $\sum_{i=1}^m d_i=k$, and in the third we used
$\theta_k\le \theta_i$ for $i\le k$.

Thus every rank-$k$ orthogonal projection $P$ satisfies
\[
\operatorname{tr}(PS)\le \sum_{i=1}^k \theta_i.
\]

To see that equality is attained, let $P_*$ be the orthogonal
projection onto $\operatorname{span}\{u_1,\dots,u_k\}$. Then
$P_*u_i=u_i$ for $i\le k$ and $P_*u_i=0$ for $i>k$, so
\[
\operatorname{tr}(P_*S)
=\sum_{i=1}^m \theta_i \langle P_*u_i,u_i\rangle
=\sum_{i=1}^k \theta_i.
\]
Hence
\[
\max\bigl\{\operatorname{tr}(PS): P \text{ is an orthogonal projection of rank }k\bigr\}
=\sum_{i=1}^k \theta_i,
\]
which proves the theorem.
\end{proof}
\end{document}